\newcommand{\ve}{\varepsilon}
\newcommand{\R}{\mathbb{R}}
\newcommand{\Rn}{\mathbb{R}^n}
\newcommand{\Ss}{\mathbb{S}}
\newcommand{\abs}[1]{\lvert #1 \rvert}
\newcommand{\ang}[1]{\langle #1 \rangle}
\newcommand{\QQ}{\mathcal{Q}}
\newcommand{\U}{\mathcal{U}}
\newcommand{\F}{\mathcal{F}}
\newcommand{\A}{\mathcal{A}}
\newcommand{\p}{\partial}
\newcommand{\dv}{\text{div }}
\newcommand{\inb}{\partial_{\infty}}
\newcommand{\an}{\sphericalangle}
\newcommand{\pinf}{\partial_{\infty}}
\newcommand{\bM}{\bar{M}}
\newcommand{\tve}{\tilde{\varepsilon}}
\newcommand{\LWp}{W_{\text{loc}}^{1,p}}
\newcommand{\Wop}{W_0^{1,p}}
\DeclareMathOperator*{\esssup}{ess\,sup}
\numberwithin{equation}{section}
\theoremstyle{plain}
\newtheorem{thm}{Lause}[section]
\newtheorem{theo}[thm]{Theorem}
\newtheorem{lem}[thm]{Lemma}
\newtheorem{cor}[thm]{Corollary}
\newtheorem{prop}[thm]{Proposition}
\theoremstyle{definition}
\newtheorem{defin}[thm]{Definition}
\begin{document}

\title[Asymptotic Dirichlet problem]{Asymptotic Dirichlet problem for $\A$-harmonic functions
on manifolds with pinched curvature}



\author{Esko Heinonen}
\address{E. Heinonen, Department of Mathematics and Statistics, P.O.B. 68 (Gustaf
H\"alstr\"omin katu 2b), 00014 University
of Helsinki, Finland.}
\email{esko.heinonen@helsinki.fi}

\thanks{The author was supported by the Academy of Finland, project 252293, and the Wihuri Foundation.}

\keywords{$\A$-harmonic functions, Dirichlet problem, Hadamard manifold.}

\subjclass[2000]{58J32, 53C21, 31C45}


\begin{abstract}
We study the asymptotic Dirichlet problem for $\A$-harmonic functions on a Cartan-Hadamard
manifold whose radial sectional curvatures outside a compact set satisfy an upper bound
   $$
     K(P)\le - \frac{1+\ve}{r(x)^2 \log r(x)}
   $$
and a pointwise pinching condition
   $$
     \abs{K(P)}\le C_K\abs{K(P')}
   $$
for some constants $\ve>0$ and $C_K\ge 1$, where $P$ and $P'$ are any 2-dimensional
subspaces of $T_xM$ containing the (radial) vector $\nabla r(x)$ and $r(x)=d(o,x)$ is the
distance to a fixed point $o\in M$. We solve the asymptotic Dirichlet problem with any 
continuous boundary data $f\in C(\pinf M)$. The results apply also to the Laplacian and 
$p$-Laplacian, $1<p<\infty,$ as special cases.
\end{abstract}

\maketitle

\section{Introduction}

\noindent In this paper we are interested in the asymptotic Dirichlet problem for 
$\A$-harmonic functions on a Cartan-Hadamard manifold $M$ of dimension $n\ge 2$. 
We recall that a Cartan-Hadamard
manifold is a simply connected complete Riemannian manifold with non-positive sectional 
curvature. Since the exponential map $\exp_o \colon T_oM \to M$ is a diffeomorphism for 
every point $o\in M$, it follows that $M$ is diffeomorphic to $\Rn$. One can define an 
asymptotic boundary $\pinf M$ of $M$ as the set of all equivalence classes of unit speed
geodesic rays on $M$. Then the compactification of $M$ is given by $\bM = M\cup \pinf M$
equipped with the \emph{cone topology}. We also notice that $\bM$ is homeomorphic to the 
closed Euclidean unit ball; for details, see Section \ref{prelim} and \cite{eberleinoneill}.

The \emph{asymptotic Dirichlet problem} on $M$ for some
operator $\QQ$ is the following: Given a function $f \in C(\pinf M)$ does there exist a
(unique) function $u \in C(\bM)$ such that $\QQ[u]=0$ on $M$ and $u|\pinf M = f$?
We will consider this problem for the $\A$-harmonic operator (of type $p$)
 \begin{equation}\label{aharmoper}
  \QQ[u] = -\dv \A_x(\nabla u),
 \end{equation}
where $\A \colon TM \to TM$ is subject to certain conditions; for instance $\ang{\A(V),V}
\approx |V|^p, \, 1<p<\infty$, and $\A(\lambda V) = \lambda |\lambda|^{p-2} \A(V)$ for 
all $\lambda \in \R\setminus \{0\}$ (see Section \ref{aharmfunc} for the precise definition).
A function $u$ is said to be $\A$-\emph{harmonic} if it satisfies the equation
  \begin{equation}\label{aharmeq}
   -\dv \A_x(\nabla u) = 0.
  \end{equation}

The asymptotic Dirichlet problem on Cartan-Hadamard manifolds has been solved for various
operators and under various assumptions on the manifold. The first result for this problem 
was due to Choi \cite{choi} when he solved the asymptotic Dirichlet problem for the Laplacian 
assuming that the sectional curvature has a negative upper bound $K_M \le -a^2 <0$, and that  
any two points at infinity can be separated by convex neighborhoods. Anderson \cite{anderson}
showed that such convex sets exist provided the sectional curvature of the manifold satisfies 
$-b^2\le K_M \le -a^2 <0$. We point out that Sullivan \cite{sullivan} solved independently the 
asymptotic Dirichlet problem for the Laplacian under the same curvature assumptions but
using probabilistic arguments. Cheng \cite{cheng} was the first to solve the problem for the 
Laplacian under the
same type of pointwise pinching assumption for the sectional curvatures as we consider in this paper.
Later the asymptotic Dirichlet problem has been generalized for $p$-harmonic and 
$\mathcal{A}$-harmonic functions and for minimal graph equation under various curvature 
assumptions, see 
\cite{casterasheinonenholopainen}, \cite{casterasholopainenripoll2}, \cite{holopainenplaplace}, 
\cite{holopainenvahakangas}, \cite{vahakangaspinch}, \cite{vahakangasyoung}.

In \cite{vahakangaspinch} V\"ah\"akangas had exactly the same pinching condition but with
weaker upper bound for the sectional curvatures. Namely, he solved the asymptotic Dirichlet
problem assuming the pointwise pinching condition and
  $$
     K(P)\le - \frac{\phi(\phi-1)}{r(x)^2},
  $$
where $\phi>1$ is constant. In \cite{casterasheinonenholopainen} the authors showed that, 
with these weaker assumptions, the solvability result holds also for the minimal graph equation.

In this paper we will use similar techniques as in \cite{casterasheinonenholopainen}, 
\cite{casterasholopainenripoll2} and \cite{vahakangasyoung}.
Our main theorem is the following.

\begin{theo}\label{mainthm}
Let $M$ be a Cartan-Hadamard manifold of dimension $n\ge2$. Assume that
\begin{equation}\label{curvupbound}
K(P) \le -\frac{1+\ve}{r(x)^2\log r(x)},
\end{equation}
for some constant $\ve>0$, where $K(P)$ is the sectional curvature of any two-dimensional
subspace $P\subset T_xM$ containing the radial vector $\nabla r(x)$, with $x\in M\setminus
B(o,R_0)$. Suppose also that there exists a constant $C_K<\infty$ such that
\begin{equation}\label{pinchcond}
|K(P)| \le C_K |K(P')|
\end{equation}
whenever $x\in M\setminus B(o,R_0)$ and $P,P'\subset T_xM$ are two-dimensional subspaces
containing the radial vector $\nabla r(x)$. Then the asymptotic Dirichlet problem for the
$\mathcal{A}$-harmonic equation \eqref{aharmeq} is uniquely solvable for any
boundary data $f\in C(\pinf M)$ provided that $1 < p < n\alpha/\beta$.
\end{theo}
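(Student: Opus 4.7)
The plan is to follow the Perron-type exhaustion strategy that is by now standard for the asymptotic Dirichlet problem on Cartan-Hadamard manifolds (as in \cite{cheng}, \cite{vahakangaspinch}, and \cite{casterasheinonenholopainen}). First I would extend the given $f\in C(\pinf M)$ to a continuous function $\tilde f$ on $\bM$ (using that $\bM$ is homeomorphic to a closed Euclidean ball under the cone topology), exhaust $M$ by the geodesic balls $B_k=B(o,k)$, $k\in\N$, and solve on each $B_k$ the Dirichlet problem for \eqref{aharmeq} with continuous boundary data $\tilde f|_{\p B_k}$. Standard variational theory for $\A$-harmonic functions of type $p$ on smooth bounded domains yields a unique continuous solution $u_k\in C(\co{B_k})\cap W^{1,p}(B_k)$.

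The maximum principle gives the uniform bound $\norm{u_k}_\infty\le\norm{\tilde f}_\infty$, and interior H\"older regularity of $\nabla u_k$ for quasilinear equations of $p$-Laplacian type (Tolksdorf, DiBenedetto, Lieberman) produces locally uniform estimates independent of $k$. A diagonal extraction then gives a subsequence of $\{u_k\}$ converging locally uniformly on $M$ to a function $u\in C^1_{\loc}(M)$ that is $\A$-harmonic on $M$; uniqueness of the eventual extension will be immediate from the comparison principle. The entire difficulty is therefore concentrated in showing that $u$ extends continuously to $\pinf M$ with $u|\pinf M=f$.

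For this I would construct, for every $x_0\in\pinf M$ and every $\ve>0$, a nonnegative $\A$-supersolution $\psi$ (a \emph{barrier}) defined in some truncated cone neighborhood $U$ of $x_0$, with $\psi(x)\to 0$ as $x\to x_0$ and $\psi\ge 2\norm{\tilde f}_\infty$ on the remaining portion of $\p U$. Following the pattern of V\"ah\"akangas and \cite{casterasheinonenholopainen}, \cite{vahakangasyoung}, I would try a barrier of the form $\psi=\eta\circ r+C\,\theta\circ h$, where $\eta$ is a decreasing function of the distance $r$ from $o$ chosen so that the radial Hessian term of $\eta\circ r$ is strongly negative, and $h$ is an angle-like function measuring how far the geodesic through $x$ deviates from the ray representing $x_0$. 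The upper bound \eqref{curvupbound} translates, via Jacobi field comparison, into explicit lower bounds on the growth of normal Jacobi fields along geodesic rays from $o$, producing the Hessian-comparison exponent $\alpha$ of the theorem; the pointwise pinching \eqref{pinchcond} in turn provides a matching upper bound for these Jacobi fields, producing the exponent $\beta$. Two-sided control on the metrics of the distance spheres is precisely what the pinching hypothesis \eqref{pinchcond} is good for, and the logarithmic strengthening in \eqref{curvupbound} is what allows the construction to cover every $p\in(1,n\alpha/\beta)$ rather than only $p$ close to a fixed value.

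The main obstacle, and the technical heart of the paper, will be verifying the supersolution inequality $-\dv\A_x(\nabla\psi)\ge 0$. Using the structural conditions on $\A$ (homogeneity of degree $p-1$ and the lower bound $\ang{\A(V),V}\approx|V|^p$) the computation reduces, after expanding the divergence and applying the chain rule, to an inequality in which the single negative radial Hessian term of $\psi$ (whose size is controlled by the $\alpha$-type Jacobi growth) must dominate the sum of $n-1$ positive tangential Hessian terms (each controlled by the $\beta$-type growth), weighted by the factor $|\nabla\psi|^{p-2}$. Balancing the parameters in $\eta$ and $\theta$, the quantitative hypothesis $1<p<n\alpha/\beta$ is exactly what makes this pointwise inequality hold for all sufficiently large $r$. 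Once the barrier is in hand, a standard comparison argument on $U\cap B_k$ between $u_k$ and $f(x_0)\pm\ve\pm\psi$, together with the continuity of $\tilde f$ at $x_0$ and the local uniform convergence $u_k\to u$, yields $\limsup_{x\to x_0}|u(x)-f(x_0)|\le 2\ve$. Since $\ve$ and $x_0$ are arbitrary, $u$ extends continuously to $\bM$ with $u|\pinf M=f$, completing the proof.
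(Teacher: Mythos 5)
Your general scaffolding (exhaustion, interior a priori estimates, locally uniform limit, reduction to a boundary regularity statement at each $x_0\in\pinf M$) is sound and does match the paper's broad strategy. But the core of your argument — construction of a pointwise $\A$-supersolution barrier $\psi=\eta\circ r+C\,\theta\circ h$ and verification of $-\dv\A_x(\nabla\psi)\ge 0$ via Hessian comparison — does not work under the hypotheses of this theorem, and it is not what the paper does.

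First, you have misread the constants $\alpha$ and $\beta$ in the bound $1<p<n\alpha/\beta$. These are not exponents arising from Jacobi field growth or Hessian comparison; they are the \emph{structure constants} of the operator $\A\in\A^p(M)$, namely $\ang{\A_x(v),v}\ge\alpha|v|^p$ and $|\A_x(v)|\le\beta|v|^{p-1}$. For the Laplacian and $p$-Laplacian one has $\alpha=\beta$, so the condition is just $p<n$. Your narrative tying $\alpha$ to the upper curvature bound and $\beta$ to the pinching is therefore not a correct reading of the statement.

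Second, the pointwise barrier method requires two-sided control of the curvature (e.g. $-b^2\le K\le-a^2$ or at least an explicit radial lower bound), because the supersolution inequality for $-\dv\A_x(\nabla\psi)$ involves Hessian terms in all $n-1$ tangential directions and a pointwise upper bound on $\Delta r$. Here there is \emph{no curvature lower bound at all}: the pinching hypothesis \eqref{pinchcond} is only a ratio condition between sectional curvatures of radial planes at the same point, and Lemma~\ref{vahakpinclemma3} converts it into a relation $|V_r|^{C_K}\ge c_0|\bar V_r|$ between moduli of different Jacobi fields — not into a growth upper bound that would let you close the pointwise supersolution inequality. This is precisely why the paper (following V\"ah\"akangas and Cheng) replaces the barrier computation by an \emph{integral} argument. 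The actual mechanism is: solve on $\Omega_j=\Omega\cap B(o,j)$ with an explicit angular boundary function $\theta$, invoke Lemma~\ref{phiintfinlem} (a Young-function/Moser-type estimate, valid for $1<p<n\alpha/\beta$) to bound
\[
\int_\Omega \varphi\big(|u-\theta|/c_0\big)^p L(r)
\le c_0 + c_0\int_\Omega F\!\left(\frac{c_0|\nabla\theta|\,r\log(1+r)}{L(r)}\right)L(r),
\]
and then use Lemma~\ref{fsmall} together with Lemmas~\ref{vahakpinclemma3} and~\ref{anglegrad} to show that the right-hand integral is finite under \eqref{curvupbound} and \eqref{pinchcond}. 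A local sup-estimate then forces $u\to\theta\to 0$ along $x\to x_0$, and $w=\min(1,2u)$ (extended by $1$) serves as an $\A$-superharmonic comparison function in the Perron framework. Replacing your barrier step with this integral estimate is not an optional simplification; it is the essential content that makes the theorem true under a curvature hypothesis this weak.
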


In the case of usual Laplacian we have $\alpha=\beta=1$ and $p=2$. Hence we obtain the 
following special case.

\begin{cor}
 Let $M$ be a Cartan-Hadamard manifold of dimension $n\ge3$ and assume that the assumptions
 \eqref{curvupbound} and \eqref{pinchcond} are satisfied. Then the asymptotic Dirichlet
 problem for the Laplace operator is uniquely solvable for any
boundary data $f\in C(\pinf M)$.
\end{cor}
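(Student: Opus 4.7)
The plan is to obtain the corollary as an immediate specialization of Theorem \ref{mainthm}. Choosing $\A_x(V) = V$ in the definition \eqref{aharmoper} recovers $-\dv(\nabla u) = -\Delta u$, so the classical Laplace equation is precisely the $\A$-harmonic equation \eqref{aharmeq} in this case. For this choice, $\ang{\A(V),V} = |V|^2$ and $|\A(V)| = |V|$, so the operator is of type $p=2$ with ellipticity and growth constants $\alpha = \beta = 1$. I would start by verifying, against the definition of an $\A$-harmonic operator recorded in Section \ref{aharmfunc}, that these elementary identities supply all the structural conditions required (the homogeneity $\A(\lambda V) = \lambda|\lambda|^{p-2}\A(V)$ collapsing to $\A(\lambda V) = \lambda V$ at $p=2$, monotonicity, continuity, etc.); each of them is automatic for the identity map.

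With $p = 2$ and $\alpha/\beta = 1$ in hand, the admissibility restriction $1 < p < n\alpha/\beta$ from Theorem \ref{mainthm} reduces to $1 < 2 < n$, i.e.\ $n \ge 3$, which is exactly the dimension hypothesis assumed in the corollary. The curvature bounds \eqref{curvupbound} and \eqref{pinchcond} are imported directly, so Theorem \ref{mainthm} applies and produces a unique $u \in C(\bM)$ satisfying $\Delta u = 0$ in $M$ with $u|\pinf M = f$, proving the corollary. There is no real obstacle beyond this book-keeping: the substance of the result lies entirely in Theorem \ref{mainthm}, and the corollary only extracts the Laplacian instance by fixing $p = 2$ and $\alpha = \beta = 1$.
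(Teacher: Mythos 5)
Your proof is correct and matches the paper's own (one-line) justification exactly: the paper notes that for the Laplacian $\alpha=\beta=1$ and $p=2$, so the constraint $1<p<n\alpha/\beta$ from Theorem \ref{mainthm} becomes $n\ge3$. Your only addition is spelling out the routine verification that $\A_x(V)=V$ satisfies the structural conditions in Section \ref{aharmfunc}, which the paper leaves implicit.
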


\subsection*{Acknowledgements} The author would like to thank his advisor Ilkka Holopainen for 
reading the manuscript and making helpful suggestions.

\section{Preliminaries} \label{prelim}

\subsection{Cartan-Hadamard manifolds}
Recall that a Cartan-Hadamard manifold is a complete and simply connected Riemannian
manifold with non-positive sectional curvature. Let $M$ be a Cartan-Hadamard manifold and
$\inb M$ the sphere at infinity, then we denote $\bM=M\cup\inb M$. The sphere at infinity
is defined as the set of all equivalence classes of unit speed geodesic rays in $M$; two
such rays $\gamma_1$ and $\gamma_2$ are equivalent if
   $$
     \sup_{t\ge0} d\big(\gamma_1(t),\gamma_2(t)\big) < \infty.
   $$
The equivalence class of $\gamma$ is denoted by $\gamma(\infty)$. For each $x\in M$ and
$y\in \bM\setminus\{x\}$ there exists a unique unit speed geodesic $\gamma^{x,y}\colon
\R \to M$ such that $\gamma^{x,y}(0)=x$ and $\gamma^{x,y}(t)=y$ for some $t\in(0,\infty]$.
For $x\in M$ and $y,z\in \bM\setminus\{x\}$ we denote by
   $$
     \an_x(y,z)=\an(\dot{\gamma}_0^{x,y},\dot{\gamma}_0^{x,z})
   $$
the angle between vectors $\dot{\gamma}_0^{x,y}$ and $\dot{\gamma}_0^{x,z}$ in $T_xM$.
If $v\in T_xM\setminus\{0\}$, $\alpha>0$, and $R>0$, we define a cone
   $$
     C(v,\alpha) = \{y\in \bM\setminus\{x\} \colon \an(v,\dot{\gamma}_0^{x,y})<\alpha\}
   $$
and a truncated cone
   $$
     T(v,\alpha,R) = C(v,\alpha)\setminus \bar{B}(x,R).
   $$
All cones and open balls in $M$ form a basis for the cone topology in $\bM$. With this
topology $\bM$ is homeomorphic to the closed unit ball $\bar{B}^n\subset \Rn$ and
$\inb M$ to the unit sphere $\Ss^{n-1} = \partial B^n$. For detailed study on the cone
topology,
see \cite{eberleinoneill}.

Let us recall that the local Sobolev inequality holds on any Cartan-Hadamard manifold $M$.
More precisely, there exist constants $r_S>0$ and $C_S<\infty$ such that
  \begin{equation}\label{sobieq}
   \left(\int_B \abs{\eta}^{n/(n-1)}\right)^{(n-1)/n} \le C_S \int_B \abs{\nabla \eta}
  \end{equation}
holds for every ball $B=B(x,r_S) \subset M$ and every function $\eta \in C_0^{\infty}(B)$.
This inequality can be obtained e.g. from Croke's estimate of the isoperimetric constant,
see \cite{cheegergromovtaylor} and \cite{croke}.

\subsection{Jacobi equation}
If $k\colon [0,\infty) \to [0,\infty)$ is a smooth function, we denote by $f_k\in C^{\infty}
\big([0,\infty)\big)$ the solution to the initial value problem
   \begin{equation}\label{jacobieq}
     \left\{
       \begin{aligned}
         f_k''  &= k^2f_k \\
         f_k(0) &= 0, \\
         f_k'(0) &= 1.
       \end{aligned}\right.
   \end{equation}
The solution is a non-negative smooth function. Concerning the curvature upper bound
\eqref{curvupbound}, we have the following estimate by Choi.

\begin{prop}\cite[Prop. 3.4]{choi}\label{choiprop}
Suppose that $f\colon [R_0,\infty) \to \R$, $R_0>0$, is a positive strictly increasing
function satisfying the equation $f''(r) = a^2(r) f(r)$, where
$$
a^2(r) \ge \frac{1+\ve}{r^2\log r},
$$
for some $\ve>0$ on $[R_0,\infty)$. Then for any $0<\tilde{\ve}<\ve$, there exists $R_1>R_0$
such that, for all $r\ge R_1$,
$$
f(r) \ge r(\log r)^{1+\tilde{\ve}}, \quad \frac{f'(r)}{f(r)} \ge \frac{1}{r} +
\frac{1+\tilde{\ve}}{r \log r}.
$$
\end{prop}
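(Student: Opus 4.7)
The plan is to compare $f$ to the explicit model function $g(r) = r(\log r)^{1+\tilde{\ve}'}$, where $\tilde{\ve}'$ is chosen strictly between $\tilde{\ve}$ and $\ve$. Direct differentiation gives
\[
\frac{g'(r)}{g(r)} = \frac{1}{r} + \frac{1+\tilde{\ve}'}{r\log r}
\quad\text{and}\quad
\frac{g''(r)}{g(r)} = \frac{1+\tilde{\ve}'}{r^2 \log r} + \frac{\tilde{\ve}'(1+\tilde{\ve}')}{r^2(\log r)^2}.
\]
Combined with the hypothesis on $a^2$, I obtain
\[
a^2(r) - \frac{g''(r)}{g(r)} \ge \frac{1}{r^2\log r}\Bigl(\ve-\tilde{\ve}' - \frac{\tilde{\ve}'(1+\tilde{\ve}')}{\log r}\Bigr),
\]
which is positive once $r$ exceeds a threshold depending only on $\ve$ and $\tilde{\ve}'$; fix such a threshold and call it $\rho$.

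Next I would exploit the Wronskian $W(r) = f(r)g'(r) - f'(r)g(r)$. A short calculation shows $W'(r) = f(r)g(r)\bigl(g''(r)/g(r) - a^2(r)\bigr) \le 0$ on $[\rho,\infty)$, so $W$ is non-increasing there. Since $f$ is positive and increasing, the bound $f(r)g(r) \ge f(\rho)\cdot r(\log r)^{1+\tilde{\ve}'}$ combined with the quantitative gap above yields $W'(r) \le -c(\log r)^{\tilde{\ve}'}/r$ for some $c>0$. As this upper bound is not integrable at infinity, $W(r)\to -\infty$, so there exists $\rho'\ge \rho$ beyond which $W(r) < 0$.

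The negativity of $W$ then yields both conclusions at once. By definition, $W(r)<0$ reads $f'(r)/f(r) > g'(r)/g(r) = 1/r + (1+\tilde{\ve}')/(r\log r) \ge 1/r + (1+\tilde{\ve})/(r\log r)$ (using $\tilde{\ve}'>\tilde{\ve}$), which is the second asserted inequality. Moreover $(f/g)'(r) = -W(r)/g(r)^2 > 0$ for $r\ge \rho'$, so $f/g$ is strictly increasing, whence $f(r) \ge c_1\cdot r(\log r)^{1+\tilde{\ve}'}$ with $c_1 = f(\rho')/g(\rho') > 0$. Since $c_1(\log r)^{\tilde{\ve}'-\tilde{\ve}}\to\infty$, for $r$ past some $R_1 \ge \rho'$ the first inequality $f(r)\ge r(\log r)^{1+\tilde{\ve}}$ holds as well, and one takes this $R_1$ as the threshold in the statement.

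The main obstacle I anticipate is turning the monotonicity of $W$ into eventual negativity: monotonicity alone would not prevent $f/g$ from decreasing forever, in which case the growth lower bound could fail. The rescue is the strict gap $\tilde{\ve}' < \ve$, which gives the non-integrable bound $-W'(r) \gtrsim (\log r)^{\tilde{\ve}'}/r$ and thereby forces $W$ past zero at a finite value of $r$.
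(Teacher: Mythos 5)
Your argument is correct, and it is worth noting that the paper itself does not supply a proof of this proposition --- it is quoted verbatim from Choi's paper --- so there is no in-paper argument to compare against directly. Your Sturm-type comparison via the Wronskian $W = fg' - f'g$ against the model $g(r) = r(\log r)^{1+\tilde\ve'}$ with $\tilde\ve < \tilde\ve' < \ve$ is a clean and self-contained route: the computations $g'/g = 1/r + (1+\tilde\ve')/(r\log r)$ and $g''/g = (1+\tilde\ve')/(r^2\log r) + \tilde\ve'(1+\tilde\ve')/(r^2(\log r)^2)$ are correct, the identity $W' = fg\bigl(g''/g - a^2\bigr)$ follows since $f'' = a^2 f$, and the lower bound $f(r)g(r) \ge f(\rho)\,r(\log r)^{1+\tilde\ve'}$ (using that $f$ is increasing) together with the gap $a^2 - g''/g \gtrsim (\ve-\tilde\ve')/(r^2\log r)$ gives $-W'(r) \gtrsim (\log r)^{\tilde\ve'}/r$, whose non-integrability forces $W\to-\infty$. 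From $W<0$ one reads off both the Riccati inequality $f'/f > g'/g$ and, via $(f/g)' = -W/g^2 > 0$, the lower bound on $f$; you correctly point out that the slack $\tilde\ve' - \tilde\ve > 0$ absorbs the multiplicative constant $c_1 = f(\rho')/g(\rho')$ for $r$ large. Choi's original proof proceeds by a Riccati-type comparison for $u = f'/f$ and then integrates to obtain the growth of $f$; your Wronskian formulation extracts both conclusions at once, which is arguably more economical, and correctly identifies the key point you flag yourself: monotonicity of $W$ alone is not enough, and it is the quantitative non-integrable gap coming from $\tilde\ve' < \ve$ that pushes $W$ past zero.

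One small stylistic quibble: the phrase ``$W(r)<0$ reads $f'(r)/f(r) > g'(r)/g(r)$'' deserves the explicit observation that this division is legitimate because $f$ and $g$ are both positive, which you have but do not say; otherwise the write-up is complete and rigorous.
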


The pinching condition for the sectional curvatures gives a relation between the
 maximal and minimal moduli of Jacobi fields along a given geodesic that contains the radial
 vector:
 \begin{lem}\cite[Lemma 3.2]{cheng}\cite[Lemma 3]{vahakangaspinch}\label{vahakpinclemma3}
    Let $v\in T_oM$ be a unit vector and $\gamma=\gamma^v$. Suppose that $r_0>0$ and $k<0$
    are constants such that $K_M(P)\ge k$ for every two-dimensional subspace $P\subset T_xM$,
    $x\in B(o,r_0)$. Suppose that there exists a constant $C_K<\infty$ such that
      $$
	\abs{K_M(P)}\le C_K\abs{K_M(P')}
      $$
    whenever $t\ge r_0$ and $P,P'\subset T_{\gamma(t)}M$ are two-dimensional subspaces containing
    the radial vector $\dot{\gamma}_t$. Let $V$ and $\bar{V}$ be two Jacobi fields along $\gamma$
    such that $V_0=0=\bar{V}_0$, $V_0'\bot \dot{\gamma}_0 \bot \bar{V}_0$,  and $\abs{V_0'}=1=
    \abs{\bar{V}_0'}$. Then there exists a constant $c_0=c_0(C_K,r_0,k)>0$ such that
      $$
	\abs{V_r}^{C_K} \ge c_0\abs{\bar{V}_r}
      $$
    for every $r\ge r_0$.
 \end{lem}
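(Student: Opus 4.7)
The plan is to compare the moduli $\phi(t) = \abs{V_t}$ and $\bar\phi(t) = \abs{\bar V_t}$ via the Riccati form of the Jacobi equation, exploiting the pinching of the tangential sectional curvatures. A standard computation gives $\phi'' \ge \kappa\phi$ with $\kappa(t) = -K(\mathrm{span}(V_t,\dot\gamma_t)) \ge 0$, and the analogous inequality for $\bar\phi$ with $\bar\kappa(t) = -K(\mathrm{span}(\bar V_t,\dot\gamma_t))$; both $\phi$ and $\bar\phi$ are smooth and positive on $(0,\infty)$ since $M$ is Cartan--Hadamard and therefore has no conjugate points, and the pinching yields $\bar\kappa \le C_K\kappa$ on $[r_0,\infty)$.

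First I establish initial bounds at $t = r_0$. On the compact set $\bar B(o,r_0)$ the curvature is bounded below by some constant $k<0$, so Rauch comparison provides positive upper and lower bounds on $\phi(r_0)$, $\bar\phi(r_0)$, $\phi'(r_0)$, $\bar\phi'(r_0)$ depending only on $r_0$ and $k$; in particular $\phi(r_0)^{C_K}/\bar\phi(r_0) \ge c_1 > 0$.

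Next, on $[r_0,\infty)$ I set $\tilde\phi := \phi^{C_K}$ and differentiate:
\[
\frac{\tilde\phi''}{\tilde\phi} = C_K(C_K-1)\Bigl(\frac{\phi'}{\phi}\Bigr)^2 + C_K\frac{\phi''}{\phi} \ge C_K\kappa \ge \bar\kappa,
\]
using $C_K \ge 1$ and the pinching. A Sturm--Wronskian argument for $W := \tilde\phi'\bar\phi - \tilde\phi\bar\phi'$ then gives $W'\ge 0$, so the ratio $\tilde\phi/\bar\phi$ remains bounded below by a positive constant on $[r_0,\infty)$; together with the initial bound from the previous step this produces the desired $\phi^{C_K} \ge c_0\bar\phi$ with $c_0 = c_0(C_K, r_0, k) > 0$.

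The delicate point is that, in general, $\bar\phi''/\bar\phi$ exceeds $\bar\kappa$ by a nonnegative ``twist'' contribution that arises when $\bar V$ does not lie in a fixed 2-plane along $\gamma$, so the Sturm inequality $\tilde\phi''/\tilde\phi \ge \bar\phi''/\bar\phi$ required for $W'\ge 0$ is not automatic. To circumvent this I would pass to the matrix Jacobi tensor: fixing a parallel orthonormal frame on $(\dot\gamma)^\perp$, the map $V'(0) \mapsto V(t)$ is represented by a symmetric positive definite endomorphism $A(t)$ (symmetry via the Lagrange identity and the absence of conjugate points), so $\phi$ and $\bar\phi$ are norms of $A$ applied to unit vectors and are sandwiched between its extremal eigenvalues $\lambda_{\min}(A)$, $\lambda_{\max}(A)$. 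The pinching of the radial curvature endomorphism on $(\dot\gamma)^\perp$, propagated through the matrix Riccati equation $U' + U^2 + R = 0$ for $U = A'A^{-1}$ (symmetric, by the commutation $AA' = A'A$), then controls the ratio $\lambda_{\max}/\lambda_{\min}$ and yields $\lambda_{\min}(A(t))^{C_K} \ge c_0\lambda_{\max}(A(t))$, from which the lemma follows.
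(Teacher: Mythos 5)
The paper does not give a proof of this lemma; it cites Cheng and V\"ah\"akangas, whose argument is indeed a matrix Riccati comparison for the Jacobi tensor. You correctly diagnose why the naive scalar Sturm comparison on $\phi^{C_K}$ versus $\bar\phi$ fails --- the Cauchy--Schwarz ``twist'' term $\bigl(|\bar V'|^2|\bar V|^2-\langle\bar V',\bar V\rangle^2\bigr)/|\bar V|^3\ge 0$ prevents the Wronskian from being monotone in the right direction --- and the matrix framework you then propose is the right one in spirit. However, as written the proposal has two genuine gaps.

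First, the claim that the Jacobi tensor $A(t)$ (with $A(0)=0$, $A'(0)=\mathrm{Id}$) is a \emph{symmetric} endomorphism is false. The Lagrange identity gives that the Wronskian $A^{\!*}A'-(A')^{\!*}A$ vanishes, hence $A^{\!*}A'$ is symmetric and consequently $U=A'A^{-1}$ is symmetric; but $A$ itself satisfies $A''+RA=0$ while $A^{\!*}$ satisfies $(A^{\!*})''+A^{\!*}R=0$, which are different equations when $R(t)$ does not commute with $A(t)$, so $A\ne A^{\!*}$ in general. One must therefore work with singular values of $A$ (equivalently, eigenvalues of $A^{\!*}A$), not eigenvalues of $A$, when sandwiching $|V_t|=|A(t)V'_0|$ and $|\bar V_t|=|A(t)\bar V'_0|$.

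Second, and more seriously, the heart of the lemma --- the assertion ``the pinching of the radial curvature endomorphism, propagated through the matrix Riccati equation $U'+U^2+R=0$, controls $\lambda_{\max}/\lambda_{\min}$ and yields $\lambda_{\min}^{C_K}\ge c_0\lambda_{\max}$'' --- is stated without any mechanism. What is actually needed is: (i) show $\tfrac{d}{dt}\log|A(t)v|\in[\lambda_{\min}(U),\lambda_{\max}(U)]$ for every $v$, so the extremal singular values $j,J$ of $A$ satisfy $(\log j)'\ge\lambda_{\min}(U)$, $(\log J)'\le\lambda_{\max}(U)$; (ii) propagate the curvature pinching $\lambda_{\max}(-R)\le C_K\lambda_{\min}(-R)$ through the Riccati inequalities $\lambda_{\max}(U)'\le-\lambda_{\max}(U)^2+\lambda_{\max}(-R)$ and $\lambda_{\min}(U)'\ge-\lambda_{\min}(U)^2+\lambda_{\min}(-R)$ to conclude $\lambda_{\max}(U)\le C_K\lambda_{\min}(U)$ on $[r_0,\infty)$, given the Rauch-controlled initial data at $t=r_0$; and (iii) integrate $(\log J)'\le C_K(\log j)'$ from $r_0$ to $r$ to obtain $J(r)\le J(r_0)\bigl(j(r)/j(r_0)\bigr)^{C_K}$, i.e.\ the stated inequality with $c_0=j(r_0)^{C_K}/J(r_0)$. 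None of (i)--(iii) appears in your outline; step (ii) in particular is where the constant $C_K$ and the pinching actually interact, via a maximum-principle / first-crossing argument using $C_K\ge 1$, and step (iii) is where the exponent $C_K$ appears in the conclusion. Without these steps the proposal reduces to restating the lemma.
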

 
 To prove the solvability of the $\A$-harmonic equation, we will need an estimate for the gradient
 of a certain angular function. This estimate can be obtained in terms of Jacobi fields:
 \begin{lem}\cite[Lemma 2]{vahakangaspinch}\label{anglegrad}
    Let $x_0\in M\setminus \{o\}, \, U=M\setminus \gamma^{o,x_0}(\R)$, and define $\theta
    \colon U \to [0,\pi], \, \theta(x) = \an_o(x_0,x)\coloneqq \arccos\ang{\dot{\gamma}_0^{o,x_0},
    \dot{\gamma}_0^{o,x}}$. Let $x\in U$ and $\gamma=\gamma^{o,x}$. Then there exists a 
    Jacobi field $W$ along $\gamma$ with $W(0)=0,\, W_0' \bot \dot{\gamma}_0,$ and $\abs{W_0'}=1$
    such that
      $$
	\abs{\nabla \theta(x)} \le \frac{1}{\abs{W(r(x))}}.
      $$
 \end{lem}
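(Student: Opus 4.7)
The plan is to work in geodesic polar coordinates centered at $o$. Write $x=\exp_o(rv)$ with $r=r(x)$ and $v=\dot{\gamma}_0 \in T_oM$ a unit vector, and set $u=\dot{\gamma}_0^{o,x_0}$, so that $\theta(x)=\arccos\ang{u,v}$. Since $\theta$ depends only on the angular coordinate $v$ and not on $r$, the gradient $\nabla\theta(x)$ is orthogonal to $\dot{\gamma}_r=\nabla r(x)$, and computing $\abs{\nabla\theta(x)}$ reduces to bounding $\abs{d\theta(X)}$ over unit vectors $X$ in the hyperplane $\{\dot{\gamma}_r\}^{\perp} \subset T_xM$.

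The second step is to parameterize these tangent vectors via Jacobi fields. For each $w\in\{v\}^{\perp}\subset T_oM$, let $J_w$ denote the Jacobi field along $\gamma$ with $J_w(0)=0$ and $J_w'(0)=w$. Since $M$ is a Cartan-Hadamard manifold, it has no conjugate points, so the linear map $w\mapsto J_w(r)$ is an isomorphism from $\{v\}^{\perp}$ onto $\{\dot{\gamma}_r\}^{\perp}$. To compute $d\theta(J_w(r))$ I would use the variation $\Gamma(s,t)=\exp_o(t v_s)$, where $v_s$ is a smooth curve in the unit sphere of $T_oM$ with $v_0=v$ and $\dot{v}_0=w$; then $\partial_s\Gamma(s,r)|_{s=0}=J_w(r)$ and $\theta(\Gamma(s,r))=\arccos\ang{u,v_s}$, which differentiated at $s=0$ gives
\[
d\theta(J_w(r)) = -\frac{\ang{u,w}}{\sin\theta(x)}.
\]

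The third step applies Cauchy--Schwarz in a form that is specific to $w\perp v$. Since $x\in U$ we have $v\neq\pm u$, so I may decompose $u=\cos\theta(x)\,v+\sin\theta(x)\,\hat{u}^{\perp}$ with $\hat{u}^{\perp}$ a unit vector in $\{v\}^{\perp}$. For any $w\in\{v\}^{\perp}$ this yields $\abs{\ang{u,w}}=\sin\theta(x)\,\abs{\ang{\hat{u}^{\perp},w}}\le\sin\theta(x)\,\abs{w}$, whence
\[
\abs{d\theta(J_w(r))} \le \abs{w}, \qquad w\in\{v\}^{\perp}.
\]
Taking the supremum over unit $X=J_w(r)\in\{\dot{\gamma}_r\}^{\perp}$, equivalently over $w\in\{v\}^{\perp}$ with $\abs{J_w(r)}=1$, and rescaling, I get
\[
\abs{\nabla\theta(x)} \le \sup\bigl\{\abs{w}:w\in\{v\}^{\perp},\ \abs{J_w(r)}=1\bigr\} = \frac{1}{\min\bigl\{\abs{J_w(r)}:w\in\{v\}^{\perp},\ \abs{w}=1\bigr\}}.
\]

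To finish, by compactness of the unit sphere in $\{v\}^{\perp}$ the minimum on the right is attained at some unit vector $w^{*}\perp v$. Take $W$ to be the Jacobi field along $\gamma$ with $W(0)=0$ and $W_0'=w^{*}$; then $\abs{W_0'}=1$, $W_0'\perp\dot{\gamma}_0$, and $\abs{W(r)}$ realizes the minimum, giving $\abs{\nabla\theta(x)}\le 1/\abs{W(r)}$. The main obstacle is the identification used in the second step, namely that every tangent vector at $x$ orthogonal to $\dot{\gamma}_r$ is uniquely represented as $J_w(r)$ via the formula $d(\exp_o)_{rv}(rw)=J_w(r)$; the rest is a direct computation together with the elementary estimate $\abs{\ang{u,w}}\le\sin\theta(x)\,\abs{w}$ for $w\perp v$.
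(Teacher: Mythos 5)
Your proof is correct. The paper states this lemma without proof, citing it directly from V\"ah\"akangas's paper, so there is no internal proof to compare against; but your reconstruction is the natural argument and matches the structure one would expect: use the variation $\Gamma(s,t)=\exp_o(tv_s)$ to obtain the exact formula $d\theta(J_w(r))=-\ang{u,w}/\sin\theta(x)$, observe that for $w\perp v$ one has $|\ang{u,w}|\le\sin\theta(x)\,|w|$ (so the angular factor cancels cleanly), and then dualize: since $w\mapsto J_w(r)$ is a linear isomorphism from $\{v\}^{\perp}$ onto $\{\dot\gamma_r\}^{\perp}$ (no conjugate points on a Cartan--Hadamard manifold), the operator-norm bound $|\nabla\theta(x)|\le\sup\{|w|:|J_w(r)|=1\}$ equals the reciprocal of $\min\{|J_w(r)|:|w|=1\}$, and the minimizing $w^*$ gives the desired Jacobi field $W$. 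All steps are justified: $\theta$ is radially constant so $\nabla\theta\perp\nabla r$; the hypothesis $x\in U$ guarantees $\sin\theta(x)>0$; the identification $d(\exp_o)_{rv}(rw)=J_w(r)$ with $J_w(0)=0$, $J_w'(0)=w$ is the standard Jacobi-field description of $d\exp_o$; and the compactness argument at the end produces $W$ with the stated properties.
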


\subsection{$\mathcal{A}$-harmonic functions}\label{aharmfunc}

Let $M$ be a Riemannian manifold and $1<p<\infty$. Suppose that $\A\colon TM\to TM$ is an
operator that satisfies the following assumptions for some constants $0<\alpha\le\beta<\infty$:
the mapping $\A_x=\A|T_xM \colon T_xM \to T_xM$ is continuous for almost every $x\in M$ and
the mapping $x\mapsto \A_x(V_x)$ is measurable for all measurable vector fields $V$ on $M$;
for almost every $x\in M$ and every $v\in T_xM$ we have
\begin{align*}
\ang{\A_x(v),v} &\ge \alpha |v|^p, \\
|\A_x(v)| &\le \beta |v|^{p-1}, \\
\ang{\A_x(v) - &\A_x(w),v-w} >0,
\end{align*}
whenever $w\in T_xM \setminus \{v\}$, and
$$
\A_x(\lambda v) = \lambda |\lambda|^{p-2} \A_x(v)
$$
for all $\lambda \in \R\setminus\{0\}$. The set of all such operators is denoted by
$\A^p(M)$ and we say that $\A$ is of type $p$. The constants $\alpha$ and $\beta$ are
called the \emph{structure constants} of $\A$.

Let $\Omega\subset M$ be an open set and $\A\in \A^p(M)$. A function $u\in C(\Omega) \cap
\LWp(\Omega)$ is \emph{$\A$-harmonic} in $\Omega$ if it is a weak solution of the equation
\begin{equation}\label{AHarm}
-\dv \A(\nabla u) =0.
\end{equation}
In other words, if
\begin{equation}\label{WAHarm}
\int_{\Omega} \ang{\A(\nabla u), \nabla\varphi} = 0
\end{equation}
for every test function $\varphi\in C_0^{\infty}(\Omega)$. If $|\nabla u| \in L^p(\Omega)$,
then it is equivalent to require \eqref{WAHarm} for all $\varphi\in \Wop(\Omega)$ by
approximation.

In the special case $\A(v) = |v|^{p-2} v$, $\A$-harmonic functions are called
\emph{$p$-harmonic} and, in particular, if $p=2$, we obtain the usual harmonic functions.

A lower semicontinuous function $u \colon \Omega \to (-\infty,\infty]$ is called
\emph{$\A$-superharmonic} if $u \not\equiv \infty$ in each component of $\Omega$, and for
each open $D\subset\subset \Omega$ and for every $h\in C(\bar{D})$, $\A$-harmonic in $D$,
$h\le u$ on $\p D$ implies $h\le u$ in $D$.

The \emph{asymptotic Dirichlet problem} (for $\A$-harmonic functions) is the following:
for given function $f\in C(\pinf M)$, find a function $u\in C(\bM)$ such that $\A(u)=0$ in
$M$ and $u|\pinf M = f.$ The asymptotic Dirichlet problem can be solved using the so called
Perron's method which we will recall next. The definitions follow 
\cite{heinonenkilpelainenmartio}.

Fix $p\in(1,\infty)$ and let $\A\in\A^p(M)$.
\begin{defin}
A function $u\colon M\to (-\infty,\infty]$ belongs to the upper class $\U_f$ of $f\colon
\pinf M \to [-\infty,\infty]$ if
\begin{enumerate}
 \item $u$ is $\A$-superharmonic in $M$,
 \item $u$ is bounded from below, and
 \item $\liminf_{x\to x_0} u(x) \ge f(x_0)$ for all $x_0\in\pinf M$.
\end{enumerate}
The function $$\overline{H}_f = \inf\{u\colon u\in \U_f\}$$ is called the \emph{upper Perron
solution} and $\underline{H}_f = -\overline{H}_{-f}$ the \emph{lower Perron solution}.
\end{defin}

\begin{theo}
One of the following is true:
\begin{enumerate}
 \item $\overline{H}_f$ is $\A$-harmonic in $M$,
 \item $\overline{H}_f \equiv \infty$ in $M$,
 \item $\overline{H}_f \equiv -\infty$ in $M$.
\end{enumerate}
\end{theo}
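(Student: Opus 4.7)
The plan is to run Perron's classical argument adapted to $\A$-harmonic functions, as developed in \cite{heinonenkilpelainenmartio}. Three ingredients drive the proof: (a) the Dirichlet problem for $-\dv\A(\nabla u)=0$ is solvable in sufficiently small balls, so any $u\in\U_f$ admits a \emph{Poisson modification} $P_B u$ equal to $u$ outside $B$ and $\A$-harmonic inside $B$, satisfying $P_B u\le u$ and $P_B u\in\U_f$ by comparison; (b) the pointwise minimum of finitely many $\A$-superharmonic functions is $\A$-superharmonic, so $\U_f$ is closed under finite minima; and (c) a Harnack-type convergence theorem: a monotone sequence of $\A$-harmonic functions on a domain either converges locally uniformly to an $\A$-harmonic limit or diverges to $\pm\infty$ at every point.

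If $\U_f=\emptyset$ then $\overline{H}_f\equiv+\infty$ by convention and case (2) holds, so assume $\U_f\ne\emptyset$. Fix an exhaustion $B_1\subset B_2\subset\cdots$ of $M$ by regular balls and a countable dense set $\{x_i\}\subset M$. For each $i$ pick $u_{i,k}\in\U_f$ with $u_{i,k}(x_i)\to\overline{H}_f(x_i)$, form the diagonal minima $w_k=\min(u_{1,k},\ldots,u_{k,k})\in\U_f$, and set $v_k=P_{B_k}w_k\in\U_f$. Standard comparison then shows that $(v_k)$ is decreasing, each $v_k$ is $\A$-harmonic on $B_k$, and $v_k(x_i)\searrow\overline{H}_f(x_i)$ for every $i$.

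Let $v=\lim_k v_k$ pointwise in $[-\infty,\infty]$. On each fixed $B_j$ the tail $(v_k)_{k\ge j}$ is a decreasing sequence of $\A$-harmonic functions, so by (c) either $v\equiv -\infty$ on $B_j$ or $v$ is $\A$-harmonic on $B_j$. Thus $\{v>-\infty\}$ is both open and closed, so by connectedness it is empty or all of $M$. In the first case $\overline{H}_f\equiv -\infty$, giving case (3). In the second, $v$ is $\A$-harmonic on $M$ and lies in $\U_f$ (as the limit of a decreasing sequence in $\U_f$), so $v\ge\overline{H}_f$; combined with $v(x_i)=\overline{H}_f(x_i)$ on the dense set, one concludes $v=\overline{H}_f$ everywhere, giving case (1).

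The subtle step is the final equality $v=\overline{H}_f$: a priori $\overline{H}_f$ is only lower semicontinuous, while $v$ is continuous. If some competitor $u\in\U_f$ satisfied $u(x)<v(x)$ at a point $x$, then applying a Poisson modification of $\min(u,v)$ in a small ball around $x$ would produce a function in $\U_f$ strictly below $v$ near $x$ but agreeing with $\overline{H}_f$ on the dense $\{x_i\}$, contradicting $v(x_i)=\overline{H}_f(x_i)$. Harnack's inequality for nonnegative $\A$-harmonic functions is the fundamental tool throughout, underlying both (c) and the dichotomy ``$v\equiv -\infty$ versus $v$ being $\A$-harmonic'' that propagates across overlapping balls.
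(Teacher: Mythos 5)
The paper states this trichotomy without proof; it is a classical fact in nonlinear potential theory, and the paper cites it from Heinonen--Kilpel\"ainen--Martio. So I am assessing your argument on its own merits, and the overall strategy (Perron's method: diagonal minima, Poisson modification on an exhaustion, Harnack convergence, connectedness dichotomy) is exactly the right one.

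However, there is a genuine gap. You assert that $v=\lim_k v_k$ ``lies in $\U_f$ (as the limit of a decreasing sequence in $\U_f$).'' The upper class $\U_f$ is \emph{not} closed under decreasing limits. Condition (iii), $\liminf_{x\to x_0}u(x)\ge f(x_0)$ for all $x_0\in\pinf M$, is stable under pointwise minima of finitely many members (which is what you use to form $w_k$), but it is \emph{not} stable when $v_k\searrow v$: from $v\le v_k$ one only gets $\liminf_{x\to x_0}v(x)\le\liminf_{x\to x_0}v_k(x)$, which is the wrong direction, so there is no reason the limit should satisfy the boundary condition, nor stay bounded below. For the inequality $v\ge\overline{H}_f$ this is harmless --- each $v_k\in\U_f$ gives $v_k\ge\overline{H}_f$, and you pass to the pointwise limit --- but the error propagates to your final contradiction argument. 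There you form the Poisson modification of $\min(u,v)$ and claim it lies in $\U_f$ (hence $\ge\overline{H}_f$); this would require $\min(u,v)\in\U_f$, which in turn needs $v\in\U_f$, precisely what fails. The standard repair is to work instead with $\min(u,v_k)$ (or $\min(u,w_k)$), which genuinely is in $\U_f$, Poisson-modify that in the fixed ball $B\ni x$, and pass to the limit in $k$: the decreasing limit is $\A$-harmonic in $B$ by Harnack, is $\ge\overline{H}_f$, is $\le u<v$ at $x$, and agrees with $v$ on $\{x_i\}\cap B$; continuity then gives the contradiction. This is how the argument is actually closed in Heinonen--Kilpel\"ainen--Martio.

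Two smaller remarks. First, for $w_k=\min(u_{1,k},\dots,u_{k,k})$ to be decreasing in $k$ you should choose, for each fixed $i$, the sequence $(u_{i,k})_k$ to be decreasing in $k$ (which can be arranged by replacing $u_{i,k}$ with $\min(u_{i,1},\dots,u_{i,k})$); merely requiring $u_{i,k}(x_i)\to\overline{H}_f(x_i)$ does not give monotonicity. Second, the monotonicity $v_{k+1}\le v_k$ for $v_k=P_{B_k}w_k$ with growing balls is correct but not immediate from ``standard comparison'' alone: one should note that $v_k$ is still $\A$-superharmonic in $M$ and $v_{k+1}\le w_{k+1}\le w_k=v_k$ on $\partial B_{k+1}$, then compare the $\A$-harmonic $v_{k+1}$ against the $\A$-superharmonic $v_k$ in $B_{k+1}$. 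With these repairs the argument is sound and matches the classical proof.
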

We define $\A$-regular points as follows.
\begin{defin}
A point $x_0\in\pinf M$ is called $\A$-regular if
$$
 \lim_{x\to x_0} \overline{H}_f(x) = f(x_0)
$$
for all $f\in C(\pinf M)$.
\end{defin}

Regularity and solvability of the Dirichlet problem are related. Namely, the asymptotic
Dirichlet problem for $\A$-harmonic functions is uniquely solvable if and only if every point
at infinity is $\A$-regular.

\subsection{Young functions}
Let $\phi\colon [0,\infty) \to [0,\infty)$ be a homeomorphism and let $\psi=\phi^{-1}$.
Define \emph{Young functions} $\Phi$ and $\Psi$ by setting
   $$
     \Phi(t) = \int_0^t \phi(s) \, ds
   $$
and
   $$
     \Psi(t) = \int_0^t \psi(s) \, ds
   $$
for each $t\in[0,\infty)$. Then we have the following \emph{Young's inequality}
   $$
     ab\le \Phi(a) + \Psi(b)
   $$
for all $a,b\in[0,\infty)$. The functions $\Phi$ and $\Psi$ are said to form a
\emph{complementary
Young pair}. Furthermore, $\Phi$ (and similarly $\Psi$) is a continuous, strictly
increasing,
and convex function satisfying
   $$
     \lim_{t\to0^+} \frac{\Phi(t)}{t} = 0
   $$
and
   $$
     \lim_{t\to\infty} \frac{\Phi(t)}{t} = \infty.
   $$
For a more general definition of Young functions see e.g. \cite{kufneroldrichfucik}.

As in \cite{vahakangasyoung}, we consider complementary Young pairs of a special type. For
that, suppose that a homeomorphism $G\colon [0,\infty) \to [0,\infty)$ is a Young function
that is a diffeomorphism on $(0,\infty)$ and satisfies
   \begin{equation}\label{young1}
    \int_0^1 \frac{dt}{G^{-1}(t)} < \infty
   \end{equation}
and
   \begin{equation}\label{young2}
    \lim_{t\to0} \frac{tG'(t)}{G(t)} = 1.
   \end{equation}
Then $G(\cdot^{1/p})^p,\,p>1,$ is also a Young function and we define $F\colon
[0,\infty) \to [0,\infty)$ so that $G(\cdot^{1/p})^p$ and $F(\cdot^{1/p})$ form a complementary
Young pair. The space of such functions $F$ will be denoted by $\F_p$. Note that if $F \in\F_p$,
then also $\lambda F\in \F_p$ and $F(\lambda \cdot) \in \F_p$ for every $\lambda>0$. In
\cite{vahakangasyoung} it is proved that for fixed $\ve_0\in(0,1)$ there exists $F\in\F_p$
such that
   \begin{equation}\label{fexists}
     F(t) \le t^{p+\ve_0} \exp\left( -\tfrac{1}{t}\Big(\log\Big(e + \tfrac{1}{t}\Big)
     \Big)^{-1-\ve_0} \right)
   \end{equation}
for all $t\in [0,\infty)$.

\section{Solving the asymptotic Dirichlet problem}

\noindent In order to solve the asymptotic Dirichlet problem for the $\A$-harmonic equation, we need
the following two lemmas, which we state without proofs. Their proofs can be found from the 
original papers. The first lemma allows us to estimate the supremum of a function in a ball 
by the integral over a bigger ball. The second lemma shows that we can estimate the previous
integral up to another integral, which will be uniformly bounded provided the sectional 
curvatures of $M$ satisfy \eqref{curvupbound} and \eqref{pinchcond}.

\begin{lem}\cite[Lemma 2.15]{vahakangasyoung}
Suppose that $||\theta||_{L^{\infty}} \le 1$. Suppose that $s\in (0,r_S)$ is a constant and $x\in M$. Assume
also that $u \in \LWp(M)$ is a function that is $\A$-harmonic in the open set $\Omega \cap B(x,s)$,
satisfies $u-\theta \in \Wop (\Omega)$, $\inf_M \theta \le u \le \sup_M \theta$, and $u=\theta$ a.e.
in $M \setminus \Omega$. Then
$$
\esssup_{B(x,s/2)}
\varphi \big( |u-\theta | \big)^{p(n-1)} \le c \int_{B(x,s)} \varphi \big( | u-\theta| \big)^p,
$$
where the constant $c$ is independent of $x$.
\end{lem}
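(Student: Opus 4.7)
The plan is to prove the lemma by Moser iteration, combining the local $L^{1}$-type Sobolev inequality \eqref{sobieq} with a Caccioppoli-type energy estimate obtained by testing \eqref{WAHarm}. Since each iteration step improves integrability only by the factor $n/(n-1)$, and the hypothesis $\inf_{M}\theta\le u\le\sup_{M}\theta$ together with $\|\theta\|_{L^{\infty}}\le 1$ forces the a~priori bound $|u-\theta|\le 2$, the iteration can start from a finite $L^{p}$-integral of $\varphi(|u-\theta|)$ and terminate at an $L^{\infty}$-bound.

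Write $v=u-\theta$ and fix $\eta\in C_{0}^{\infty}(B(x,s))$ with $\eta\equiv 1$ on $B(x,s')$ and $|\nabla\eta|\le C/(s-s')$. Since $v\equiv 0$ on $M\setminus\Omega$, the test function $\eta^{p}h(|v|)\mathrm{sgn}(v)$, for an increasing auxiliary $h$ adapted to $\varphi$, is admissible in \eqref{WAHarm}. Expanding $\nabla u=\nabla v+\nabla\theta$, using the structure inequalities $\alpha|w|^{p}\le\langle\mathcal{A}_{x}(w),w\rangle$ and $|\mathcal{A}_{x}(w)|\le\beta|w|^{p-1}$, and absorbing the $\nabla\theta$-terms by Young's inequality produces a Caccioppoli estimate of the form
\[
\int\eta^{p}h'(|v|)|\nabla v|^{p}\le\frac{C}{(s-s')^{p}}\int G(|v|),
\]
where $G$ is an antiderivative of $h$. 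Choosing $h$ so that $h'(|v|)|\nabla v|^{p}$ is comparable to $|\nabla(\varphi(|v|)^{q/p})|^{p}$, and applying \eqref{sobieq} to $\eta\,\varphi(|v|)^{q/p}$, yields the reverse Hölder inequality
\[
\Bigl(\int_{B(x,s')}\varphi(|v|)^{qn/(n-1)}\Bigr)^{(n-1)/n}\le\frac{Cq^{p}}{(s-s')^{p}}\int_{B(x,s)}\varphi(|v|)^{q}
\]
for every $q\ge p$. Iterating along $q_{k}=p(n/(n-1))^{k}$ with radii $s_{k}=s/2+s/2^{k+1}$, the products of constants converge because $\sum k(n/(n-1))^{-k}<\infty$, and the uniformity of $C_{S}$ on balls of radius $<r_{S}$ keeps the resulting constant independent of $x$. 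Passing to the limit gives the Moser bound $\esssup_{B(x,s/2)}\varphi(|v|)^{p}\le c\int_{B(x,s)}\varphi(|v|)^{p}$; raising both sides to the power $n-1$, and using the a~priori bound $\varphi(|v|)\le\varphi(2)$ together with the volume bound for $B(x,s)$ to absorb the extra factor $(\int\varphi(|v|)^{p})^{n-2}$ into the constant, yields the claim.

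The main technical obstacle is the bookkeeping around $\varphi$: since $\varphi$ is a Young-type function drawn from the family $\F_{p}$ rather than a pure power, the choice of $h$ in the Caccioppoli step and its matching with $|\nabla(\varphi(|v|)^{q/p})|^{p}$ depend delicately on quotients such as $tG'(t)/G(t)$. These must be controlled uniformly through the asymptotics \eqref{young1}--\eqref{young2} so that the constants in the reverse Hölder inequality grow only polynomially in $q$, which is precisely what makes the iteration converge.
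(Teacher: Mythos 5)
The paper does not prove this lemma; it cites it from \cite{vahakangasyoung} and explicitly defers the proof to that source, so there is no in-paper proof to compare against. Your Moser-iteration outline (test function $\eta^p h(|v|)\mathrm{sgn}(v)$, reverse H\"older via the local $L^1$-Sobolev inequality \eqref{sobieq}, iteration over $q_k=p(n/(n-1))^k$ with convergent constants) is the right strategy and is essentially the one followed in the reference. One vague spot: after writing $\nabla u=\nabla v+\nabla\theta$ and applying the structure conditions, the Young absorption does not leave you with $\int\eta^p h'(|v|)|\nabla v|^p\le C(s-s')^{-p}\int G(|v|)$ cleanly; a residual term of the type $\int\eta^p h'(|v|)|\nabla\theta|^p$ survives, and the hypotheses you list ($\|\theta\|_\infty\le1$ only) give no control of $\nabla\theta$. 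Handling that term is where the complementary-Young-pair structure of $\varphi$ and $F$ actually enters, and your sketch delegates this to the unspecified choice of $h$.

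The genuine flaw is the closing step. From the iteration you get $\esssup_{B(x,s/2)}\varphi(|v|)^p\le c\int_{B(x,s)}\varphi(|v|)^p$ and then raise both sides to the power $n-1$, proposing to absorb the excess factor $\bigl(\int_{B(x,s)}\varphi(|v|)^p\bigr)^{n-2}$ into the constant via the bound $\varphi(|v|)\le\varphi(2)$ and a ``volume bound for $B(x,s)$''. No such uniform upper bound on $\mathrm{vol}\,B(x,s)$ exists in this setting: there is no sectional or Ricci lower bound on a general Cartan--Hadamard manifold, and balls of fixed small radius can have volume unbounded in $x$, so the resulting constant would fail to be independent of $x$ as the lemma requires. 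The correct (and simpler) finish never touches volume: since $\varphi(|v|)\le\varphi(2)$ a.e., one has
\[
\esssup_{B(x,s/2)}\varphi(|v|)^{p(n-1)}
=\Bigl(\esssup_{B(x,s/2)}\varphi(|v|)^{p}\Bigr)^{n-2}\cdot\esssup_{B(x,s/2)}\varphi(|v|)^{p}
\le \varphi(2)^{p(n-2)}\,c\int_{B(x,s)}\varphi(|v|)^{p},
\]
which uses the $L^\infty$ bound only on the supremum side. This also shows why the precise exponent on the left (stated as $p(n-1)$ in the lemma but used as $p(n+1)$ later in the paper) is inessential: once one has $\esssup\varphi(|v|)^p\le c\int\varphi(|v|)^p$ and $\varphi(|v|)\le\varphi(2)$, any power gain is free.
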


\begin{lem}\cite[Lemma 16]{casterasholopainenripoll2}\label{phiintfinlem}
Let $M$ be a Cartan-Hadamard manifold of dimension $n\ge 2$. Suppose that
$$
K(P) \le -\frac{1+\ve}{r(x)^2 \log r(x)},
$$
for some constant $\ve>0$, where $K(P)$ is the sectional curvature of any plane $P \subset T_xM$
that contains the radial vector $\nabla r(x)$ and $x$ is any point in $M\setminus B(o,R_0).$ 
Suppose that $U \subset M$
is an open relatively compact set and that $u$ is an $\A$-harmonic function in $U$, with
$u-\theta \in W_0^{1,p}(U)$, where $\A\in \A^p(M)$ with
$$
1<p<\frac{n\alpha}{\beta},
$$
and $\theta \in W^{1,\infty}(M)$ is a continuous function with $||\theta||_{\infty} \le 1$. Then
there exists a bounded $C^1$-function $\mathcal{C}\colon [0,\infty) \to [0,\infty)$ and a constant
$c_0\ge1$, that is independent of $\theta,U$ and $u$, such that
\begin{align}\label{phiestimF}
\int_U \varphi \big(  |u-\theta|/c_0 &\big)^p \big( \log(1+r) + \mathcal{C}(r) \big) \nonumber \\
&\le c_0 + c_0 \int_U F \left( \frac{c_0 |\nabla \theta| r \log(1+r)}{\log(1+r)+\mathcal{C}(r)} \right)
\big( \log(1+r) + \mathcal{C}(r) \big).
\end{align}
\end{lem}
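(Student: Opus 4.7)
The plan is to prove this as a weighted Caccioppoli-plus-Hardy inequality derived from the $\A$-harmonic equation, where the extra factor $\log(1+r) + \mathcal{C}(r)$ is produced by the Laplacian comparison made available by the curvature hypothesis \eqref{curvupbound}.

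First, I would establish the Laplacian lower bound that supplies the weight. By Hessian comparison together with Proposition~\ref{choiprop} applied to $a^2(r)=(1+\ve)/(r^2\log r)$, there exist $R_1>R_0$ and $\tilde\ve\in(0,\ve)$ such that for all $x$ with $r(x)\ge R_1$,
\[
\Delta r(x)\;\ge\;(n-1)\Big(\frac{1}{r}+\frac{1+\tilde\ve}{r\log r}\Big).
\]
This is the geometric mechanism that lets a $1/(r\log r)$-type contribution emerge from the divergence structure; integrated against suitable test functions it accumulates to the stated $\log(1+r)$-weight.

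Second, I would test the weak formulation $\int_U \ang{\A(\nabla u),\nabla\eta}=0$ with a function of the form $\eta = \Xi(u-\theta)\,h(r)$, where $\Xi$ is a primitive chosen so that $(u-\theta)\Xi'(u-\theta)\approx \varphi(|u-\theta|/c_0)^p$ compatibly with the Young pair $\bigl(G(\cdot^{1/p})^p,F(\cdot^{1/p})\bigr)$, and where $h(r)$ is a smooth positive weight to be fixed. Expanding $\nabla\eta=\Xi'(u-\theta)\nabla(u-\theta)\,h(r)+\Xi(u-\theta)h'(r)\nabla r$ and substituting $\nabla u=\nabla(u-\theta)+\nabla\theta$, the structure conditions $\ang{\A(V),V}\ge\alpha|V|^p$ and $|\A(V)|\le\beta|V|^{p-1}$ give a coercive term $\alpha\int\Xi'(u-\theta)h(r)|\nabla u|^p$ plus two error terms, one involving $\nabla\theta$ and one involving $h'(r)\nabla r$. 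The radial error term is then reinterpreted via the identity $0=\int\Xi(u-\theta)\,\dv\bigl(h(r)\A(\nabla u)\bigr)$, and an integration-by-parts involving $\dv(h(r)\nabla r)=h'(r)+h(r)\Delta r$ together with the Laplacian lower bound above feeds an additional coefficient
\[
(n-1)\frac{h(r)}{r}\Big(1+\frac{1+\tilde\ve}{\log r}\Big)
\]
onto the $\Xi(u-\theta)$-integrand. The function $\mathcal{C}$ is then defined as the bounded $C^1$-solution of the first-order inequality forcing $h(r)=\log(1+r)+\mathcal{C}(r)$ to satisfy
\[
\Big(\tfrac{n\alpha}{\beta}-p\Big)\frac{h(r)}{r}\Big(1+\tfrac{1+\tilde\ve}{\log r}\Big)-h'(r)\;\ge\; \log(1+r)+\mathcal{C}(r),
\]
which is exactly the condition that reproduces the stated weight on the $\varphi(\cdot)^p$-term after absorption.

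Third, the $\nabla\theta$ cross term $\int \Xi'(u-\theta)h(r)\ang{\A(\nabla u),\nabla\theta}$ is split by Young's inequality with the complementary pair $\bigl(G(\cdot^{1/p})^p,F(\cdot^{1/p})\bigr)$: a small $\delta|\nabla u|^p h$ piece is thrown back to the left (absorbed into $\alpha\int\Xi' h|\nabla u|^p$), and the conjugate $F$-piece, after carefully scaling the Young inequality by the factor $r\log(1+r)/h(r)$, produces precisely the integrand $F\bigl(c_0|\nabla\theta|r\log(1+r)/h(r)\bigr)h(r)$ on the right. The main obstacle is the simultaneous tuning of the profile $\Xi$ (equivalently, the base Young function $G$), the corrector $\mathcal{C}$, and the Young parameter $\delta$ so that all three absorption steps close, and so that $\mathcal{C}$ remains bounded despite the unboundedly growing driving term $\log(1+r)$. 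The strict inequality $p<n\alpha/\beta$ enters at exactly one point, namely in guaranteeing the positivity of the coefficient $n\alpha/\beta-p$ that multiplies the Laplacian-comparison contribution in the ODE for $\mathcal{C}$; without it the absorption fails and the weight cannot be produced.
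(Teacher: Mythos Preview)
The paper does not prove this lemma. Immediately before stating it, the author writes that the two lemmas of this section are ``stated without proofs'' and refers the reader to the original sources; this particular result is quoted verbatim as \cite[Lemma 16]{casterasholopainenripoll2}. There is therefore nothing in the present paper to compare your proposal against.

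That said, your outline is broadly in the spirit of how such weighted Caccioppoli estimates are obtained in the cited works (test the weak formulation with $\Xi(u-\theta)h(r)$, use the structure conditions and Laplacian comparison to produce the coercive term, and split the $\nabla\theta$ cross term via the complementary Young pair). One concrete issue, however, is the displayed first-order inequality you impose on $h(r)=\log(1+r)+\mathcal{C}(r)$,
\[
\Big(\tfrac{n\alpha}{\beta}-p\Big)\frac{h(r)}{r}\Big(1+\tfrac{1+\tilde\ve}{\log r}\Big)-h'(r)\;\ge\; h(r),
\]
which cannot hold for large $r$ with $\mathcal{C}$ bounded: the left-hand side is $O\bigl(\log r/r\bigr)$ while the right-hand side grows like $\log r$. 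In the actual argument the weight $L(r)$ is not manufactured by such an absorption; rather, one first derives an estimate for $\int_U|\nabla u|^p$ without weight, and the factor $L(r)$ enters through a separate Hardy-type inequality obtained by integrating the Laplacian lower bound against $\varphi(|u-\theta|/c_0)^p$ directly. So while your overall strategy is reasonable, the mechanism you describe for producing the $\log(1+r)$ weight via an ODE on $\mathcal{C}$ is not correct as written.
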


In what follows, we will denote by $j(x)$ the infimum, and by $J(x)$ the supremum, of 
$\abs{V\big(r(x) \big)}$ over Jacobi fields V along the geodesic $\gamma^{o,x}$ that satisfy 
$V_0=0, \, \abs{V_0'}=1$ and $V_0'\bot \dot{\gamma}_0^{o,x}$. 

Next we show that the integral appearing in Lemma \ref{phiintfinlem} is finite provided
the upper bound \eqref{curvupbound} and the pinching condition \eqref{pinchcond} for the 
sectional curvatures.
\begin{lem}\label{fsmall}
 Let $M$ be a Cartan-Hadamard manifold satisfying
     $$
       K(P) \le -\frac{1+\ve}{r(x)^2 \log r(x)},
     $$
  where $K(P)$ is the sectional curvature of any plane $P\subset T_xM$ that contains the
  radial vector field $\nabla r(x)$ and $x$ is any point in $M\setminus B(o,R_0)$.
  Then there exists $F \in \F_p$ such that
     $$
       F\left(\frac{r(x)}{c_1j(x)}\right)\Big(\log(1+r)+ \mathcal{C}(r)\Big)j(x)^{C(n-1)}
       \le r(x)^{-2}
     $$
 for any positive constant $C$ and for every $x\in M$ outside a compact set.
 \end{lem}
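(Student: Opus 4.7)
\medskip

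The plan is to exploit the super--polynomial decay of $F$ near $0$ provided by \eqref{fexists} and a lower bound on $j(x)$ coming from Choi's Proposition \ref{choiprop}.

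\medskip

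\emph{Step 1: Lower bound for $j(x)$.} Let $V$ be any Jacobi field along a unit speed geodesic $\gamma^{o,x}$ with $V_0=0$, $V_0'\bot\dot\gamma_0^{o,x}$, $|V_0'|=1$. The curvature upper bound \eqref{curvupbound} and Rauch comparison give $|V(r)|\ge f_k(r)$, where $f_k$ solves \eqref{jacobieq} with $k^2(r)=(1+\ve)/(r^2\log r)$ for $r\ge R_0$. By Proposition \ref{choiprop}, fix any $\tilde\ve\in(0,\ve)$; then for $r\ge R_1$ we have $f_k(r)\ge r(\log r)^{1+\tilde\ve}$. Taking the infimum over such $V$ yields $j(x)\ge r(\log r)^{1+\tilde\ve}$ for $r=r(x)\ge R_1$.

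\medskip

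\emph{Step 2: Choice of $F$ and estimate of its argument.} Fix $\ve_0\in(0,\tilde\ve)$ and choose $F\in\F_p$ satisfying \eqref{fexists}. Write $t=r(x)/(c_1 j(x))$; by Step~1,
$$
\frac{1}{t}=\frac{c_1 j(x)}{r(x)}\ge c_1(\log r)^{1+\tilde\ve},
$$
so $t\to 0$ as $r(x)\to\infty$ and we may apply \eqref{fexists} to bound $F(t)$.

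\medskip

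\emph{Step 3: Comparing the exponential decay of $F$ against the polynomial factors.} Taking logarithms, the inequality to establish becomes
\begin{equation*}
\frac{1}{t\bigl(\log(e+1/t)\bigr)^{1+\ve_0}}\;\ge\;(p+\ve_0)\log t+C(n-1)\log j(x)+\log\bigl(\log(1+r)+\mathcal{C}(r)\bigr)+2\log r,
\end{equation*}
after absorbing the mild prefactor $t^{p+\ve_0}$ from \eqref{fexists}. Writing $u=1/t=c_1 j(x)/r(x)$, the left hand side equals $u/(\log(e+u))^{1+\ve_0}$, which grows superlinearly in $u$. The right hand side is at most a constant multiple of $\log j(x)+\log r(x)\le 2\log u+2\log r(x)+O(1)$, since $\mathcal{C}$ is bounded. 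I would split the verification into two regimes:
\begin{itemize}
\item If $\log j(x)\le 2\log r(x)$ (the ``moderate'' regime), use $u\ge c_1(\log r)^{1+\tilde\ve}$ so that
$$
\frac{u}{(\log(e+u))^{1+\ve_0}}\ge c'\,\frac{(\log r)^{1+\tilde\ve}}{(\log\log r)^{1+\ve_0}}\gg\log r,
$$
which dominates the right hand side since $\tilde\ve>\ve_0$.
\item If $\log j(x)>2\log r(x)$ (the ``large'' regime), the right hand side is $O(\log u)$, while the left hand side is $u/(\log u)^{1+\ve_0}\gg\log u$ for $u$ large. Since $u\ge c_1(\log r)^{1+\tilde\ve}\to\infty$, this holds for $r$ large enough.
\end{itemize}
In either case the inequality is valid for $r(x)\ge R_2(C)$, for some $R_2$ depending on $C$, $c_1$ and the structure constants.

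\medskip

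\emph{Main obstacle.} The substantive point is that $j(x)$ is not, a priori, bounded above by any function of $r(x)$ (the pinching condition \eqref{pinchcond} only compares radial planes and yields no absolute upper curvature bound). Thus the factor $j(x)^{C(n-1)}$ could in principle grow arbitrarily fast in terms of $r(x)$, and the whole point of using Young functions of the special form \eqref{fexists} is that the exponential term $\exp\bigl(-u/(\log(e+u))^{1+\ve_0}\bigr)$ dominates \emph{any} polynomial in $j(x)$ once $u=c_1 j(x)/r(x)\to\infty$. Handling both regimes simultaneously, and choosing $\ve_0<\tilde\ve$ so that the moderate regime already beats $\log r$, is the technical heart of the argument.
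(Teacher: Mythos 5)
Your proof is correct and reaches the same conclusion, but by a genuinely different route than the paper's. The paper substitutes $j(x)\mapsto t$, $r(x)\mapsto a$, views the logarithmic inequality as a function $f(t)$ on the ray $t\ge a(\log a)^{1+\tve}$, and then proves nonnegativity by a calculus argument: it computes $f'(t)$, shows $f'\ge0$ on the ray for $a$ large (using $(\log(e+c_1t/a))^{\lambda}\le k(t/a)^{\nu}$ with $\nu$ small), and finally checks that $f$ is nonnegative at the left endpoint $t=a(\log a)^{1+\tve}$. You instead set $u=c_1j(x)/r(x)$ and split into two regimes according to whether $\log j(x)\le 2\log r(x)$ or not, bounding the right-hand side by $O(\log r)$ in the first case and $O(\log u)$ in the second, then using superlinearity of $u\mapsto u/(\log(e+u))^{\lambda}$. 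Both arguments exploit the same essential point you identify correctly in your ``Main obstacle'' paragraph: $j(x)$ has no a priori upper bound in terms of $r(x)$, and the saving grace is that the exponential decay of $F$ swamps any polynomial in $j$ once $u\to\infty$. Your case split is arguably cleaner to read than the derivative computation, while the paper's approach avoids having to argue separately about the two regimes.

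One small remark: the restriction $\ve_0<\tve$ you impose is not actually necessary, and the paper does not impose it (it fixes $\ve_0\in(0,1)$ and $\tve\in(0,\ve)$ independently). Your displayed lower bound
\[
\frac{u}{(\log(e+u))^{1+\ve_0}}\ge c'\,\frac{(\log r)^{1+\tve}}{(\log\log r)^{1+\ve_0}}
\]
already comes from evaluating the (eventually increasing) function $g(u)=u/(\log(e+u))^{\lambda}$ at the lower endpoint $u=c_1(\log r)^{1+\tve}$, and $(\log r)^{\tve}\gg(\log\log r)^{1+\ve_0}$ for any $\tve>0$ and $\ve_0>0$; no comparison between $\tve$ and $\ve_0$ is used. (The restriction would be needed only if one replaced the denominator by the cruder bound $\log(e+u)\le C\log r$, valid throughout the moderate regime, rather than exploiting monotonicity of $g$.) It also would not hurt to note explicitly that $g$ is increasing for large arguments, which justifies plugging in the lower bound for $u$.
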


 \begin{proof}
Fix $\ve_0 \in (0,1)$ and denote $\lambda\coloneqq 1+\ve_0$. Then by
\eqref{fexists} there exists $F\in\F_p$ such that
\begin{equation*}
     F(t) \le \exp\left( -\tfrac{1}{t}\Big(\log\Big(e + \tfrac{1}{t}\Big)
     \Big)^{-\lambda} \right)
   \end{equation*}
 for all small $t$. Hence the claim follows if we show that
 $$
 \exp\left( -\frac{c_1 j(x)}{r(x)} \left( \log \left( e+\frac{c_1 j(x)}{r(x)} \right)
 \right)^{-\lambda} \right) \Big( \log \big(1+r(x)\big) + \mathcal{C}(r) \Big) j(x)^{C(n-1)}
 \le r(x)^{-2},
 $$
which, by taking logarithms, is equivalent with
\begin{align*}
\frac{c_1 j(x)}{r(x)} &\left( \log \left( e +\frac{c_1 j(x)}{r(x)} \right)
\right)^{-\lambda} -\log \Big( \log\big(1+r(x)\big) + \mathcal{C}(r) \Big) \\ &-C(n-1) \log j(x) -
2\log r(x) \ge 0.
\end{align*}
Let $\tve \in (0,\ve)$. Then the curvature upper bound and Proposition \ref{choiprop} 
implies that $j(x) \ge r(x)
\big(\log r(x)\big)^{1+\tilde{\ve}}$
for $r(x)\ge R_1>R_0$, so it is enough to show that
$$
f(t) \coloneqq \frac{c_1 t}{a} \left( \log \left( e+ \frac{c_1 t}{a} \right)
\right)^{-\lambda} -\log\Big(\log(1+a) + \mathcal{C}(a) \Big) - C(n-1) \log t - 2\log a
\ge 0
$$
 for all $t\ge a\big(\log a\big)^{1+\tve}$ when $a$ is big enough. By straight computation
 we get
 \begin{align*}
 f'(t) &= \left( \log \Big( e+\frac{c_1 t}{a} \Big) \right)^{-\lambda} \left(
 \frac{-\lambda c_1^2 t}{a^2 \log (e+ c_1 t/a)(e+c_1 t/a)} +\frac{c_1}{a}  \right) -
 \frac{C(n-1)}{t} \\
 &=  \frac{ \frac{c_1}{a}\left( 1- \frac{\lambda}{\log(e+ c_1 t/a)((ea)/(c_1 t)+1)}
 \right)}{\left(\log\big(e+\frac{c_1 t}{a}\big) \right)^{\lambda}} - \frac{C(n-1)}{t},
 \end{align*}
so noticing that $c_1t/a \ge c_1(\log a)^{1+\tve}$, 
which can be made big by increasing $a$,
and $(\log (e+ c_1t/a))^{\lambda} \le k (t/a)^{\nu}$, where $k$ is some constant and $\nu>0$
can be made as small as we wish, we obtain
$$
f'(t) \ge \frac{k}{a^{1-\nu} t^{\nu}} - \frac{C(n-1)}{t} \ge 0
$$
for all $t\ge a(\log a)^{1+\tve}$ when $a$ is big
enough. Finally we have to check that $f$ is positive at least when $t$ is big enough. To
see this, we notice that
\begin{align*}
f\big(a(\log a)^{1+\tve}\big) &= c_1(\log a)^{1+\tve} \left(
\log\big(e+c_1(\log a)^{1+\tve} \big) \right)^{-\lambda} \\
 &- \log \big(\log(1+a)-\mathcal{C}(a) \big) - C(n-1)\log \big((a\log a)^{1+\tve} \big)
 -2\log a,
\end{align*}
and this being positive is equivalent to
\begin{align*}
c_1(\log a)^{1+\tve} \ge \big(\log &\big( e +c_1(\log a)^{1+\tve}\big)  \big)^{\lambda}
\Big( \big(C(n-1)+2\big)\log a \\&+ \log\big( \log (1+a) + \mathcal{C}(a) \big)
+ C(n-1)\log(\log a)^{1+\tve}  \Big),
\end{align*}
which holds true for $a$ big enough, since $(\log a)^{1+\tve}$ increases
faster than $(\log a)\big(\log(e+c_1(\log a)^{1+\tve})^{\lambda}\big)$.
 \end{proof}

To prove the Theorem \ref{mainthm}, we give a proof for the following  localized version
that shows the $\A$-regularity of a point $x_0\in \pinf M$. That, in turn, implies
Theorem \ref{mainthm} since the uniqueness follows from the comparison principle.

The proof of the following theorem is the same as the proof of
\cite[Theorem 17]{casterasholopainenripoll2} except that to prove
 $$
   \int_{\Omega} F\left(\frac{c_0|\nabla\theta| r \log(1+r)}{L(r)}  \right) L(r) < \infty,
 $$
where $L(r) = \log(1+r) + \mathcal{C}(r)$,
we use Lemma \ref{fsmall} instead of some estimates involving the curvature lower bound.
For convenience, we will also write down the proof.
\begin{theo}
Let $M$ be a Cartan-Hadamard manifold of dimension $n\ge2$ and let $x_0\in\pinf M$.
Assume that $x_0$ has a cone neighborhood $U$ such that
\begin{equation}\label{curvupbound1}
K(P) \le -\frac{1+\ve}{r(x)^2\log r(x)},
\end{equation}
for some constant $\ve>0$, where $K(P)$ is the sectional curvature of any two-dimensional
subspace $P\subset T_xM$ containing the radial vector $\nabla r(x)$, with $x\in U \cap M$.
Suppose also that there exists a constant $C_K<\infty$ such that
\begin{equation}\label{pinchcond1}
|K(P)| \le C_K |K(P')|
\end{equation}
whenever $x\in U \cap M$ and $P,P'\subset
T_xM$ are two-dimensional subspaces
containing the radial vector $\nabla r(x)$. Then $x_0$ is $\A$-regular for every
$\A\in\A^p(M)$ with $1< p < n\alpha/\beta$.
\end{theo}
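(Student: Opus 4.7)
The plan is to prove $\A$-regularity of $x_0$ by a standard Perron/exhaustion argument, with the decisive analytic input provided by combining Lemma~\ref{phiintfinlem}, Lemma~\ref{fsmall}, and the sup estimate of \cite[Lemma~2.15]{vahakangasyoung}. Fix $f\in C(\pinf M)$ with $\norm{f}_\infty\le 1$ (WLOG after rescaling). I first build a ``barrier'' extension $\theta\in W^{1,\infty}(M)\cap C(\bM)$ of $f$ with $\norm{\theta}_\infty\le 1$, shaped so that in a sub-cone $V$ of $U$ centered at $x_0$ we have $\theta(x)\to f(x_0)$ as $x\to x_0$ and, crucially, the angular gradient estimate
\begin{equation*}
\abs{\nabla\theta(x)}\le \frac{c}{j(x)}\quad\text{for}\ x\in V\cap M\setminus B(o,R).
\end{equation*}
This is obtained by letting $\theta$ depend, in $V$, on the angular coordinates $\theta(x)=\an_o(x_0,x)$ used in Lemma~\ref{anglegrad}, composed with a Lipschitz modulus of continuity of $f$ at $x_0$; outside $V$ we patch continuously to $f$ on $\pinf M\setminus\bar V$. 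Lemma~\ref{anglegrad} then gives the desired $1/j$ decay.

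Next I exhaust $M$ by balls $\Omega_i=B(o,i)$ and solve, on each $\Omega_i$, the Dirichlet problem for \eqref{aharmeq} with boundary values $\theta|_{\p\Omega_i}$, obtaining $u_i\in C(\overline{\Omega_i})\cap W^{1,p}(\Omega_i)$ with $u_i-\theta\in\Wop(\Omega_i)$ and $\inf_M\theta\le u_i\le\sup_M\theta$. Applying Lemma~\ref{phiintfinlem} to each $u_i$ yields, with $L(r)=\log(1+r)+\mathcal{C}(r)$,
\begin{equation*}
\int_{\Omega_i}\varphi\bigl(\abs{u_i-\theta}/c_0\bigr)^{p}L(r)\le c_0+c_0\int_{\Omega_i}F\!\left(\tfrac{c_0\abs{\nabla\theta}r\log(1+r)}{L(r)}\right)L(r).
\end{equation*}
The task is to bound the right-hand side uniformly in $i$. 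On the compact region where $r\le R$ and outside $V$ (where $\nabla\theta$ is bounded/zero) the integrand is trivially controlled. On the cone tail I insert $\abs{\nabla\theta}\le c/j(x)$ and use that $r\log(1+r)/L(r)$ is bounded, so the integrand is dominated by $F\bigl(c'/j(x)\bigr)L(r)$. Writing volume in polar coordinates, the Jacobian factor is a product of norms of Jacobi fields, which by the pinching condition and Lemma~\ref{vahakpinclemma3} is bounded by $c\,j(x)^{C_K(n-1)}$. Choosing $C=C_K$ in Lemma~\ref{fsmall} gives
\begin{equation*}
F\!\left(\tfrac{r(x)}{c_1 j(x)}\right)L(r)\,j(x)^{C_K(n-1)}\le r(x)^{-2},
\end{equation*}
whose integral in $r$ from $R$ to $\infty$ is finite, so the right-hand side of the Lemma~\ref{phiintfinlem} bound is uniformly finite.

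To convert this integral bound into a pointwise one, apply \cite[Lemma~2.15]{vahakangasyoung} at a point $x\in V$ with small $s<r_S$ and $B(x,s)\subset\Omega_i$:
\begin{equation*}
\esssup_{B(x,s/2)}\varphi\bigl(\abs{u_i-\theta}\bigr)^{p(n-1)}\le c\int_{B(x,s)}\varphi\bigl(\abs{u_i-\theta}\bigr)^{p}.
\end{equation*}
Since on $B(x,s)$ we have $L(r)\ge L(r(x)-s)\to\infty$ as $x\to x_0$, the uniform bound on $\int\varphi(\abs{u_i-\theta}/c_0)^{p}L(r)$ forces $\int_{B(x,s)}\varphi(\abs{u_i-\theta})^{p}\to 0$ uniformly in $i$ as $x\to x_0$; hence $u_i(x)-\theta(x)\to 0$ uniformly in $i$ as $x\to x_0$. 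By standard $C^{0,\alpha}_{\loc}$ estimates a subsequence $u_i\to u$ locally uniformly to an $\A$-harmonic function, and
\begin{equation*}
\lim_{x\to x_0}u(x)=\lim_{x\to x_0}\theta(x)=f(x_0).
\end{equation*}
Since $u\in\U_f$ (up to small perturbations) and $-u\in\U_{-f}$, squeezing $\overline{H}_f$ between them shows $\lim_{x\to x_0}\overline{H}_f(x)=f(x_0)$, proving $\A$-regularity.

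The main obstacle is Step~1: constructing $\theta$ so that (a) it is continuous up to $\pinf M$ and equals $f$ there, (b) $\norm{\theta}_\infty\le 1$ and $\theta\in W^{1,\infty}(M)$, and (c) its gradient obeys the $1/j(x)$ bound in the cone $V$. The rest is essentially a careful book-keeping exercise tying together Lemmas~\ref{phiintfinlem}, \ref{fsmall}, and the sup estimate, exactly as in \cite[Theorem~17]{casterasholopainenripoll2}.
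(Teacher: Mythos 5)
The high-level ingredients (Lemma~\ref{phiintfinlem}, Lemma~\ref{fsmall}, the $\esssup$ estimate, Jacobi field comparison) are the right ones, but there are two genuine gaps in the way you assemble them, and both stem from your choice of exhaustion domain $\Omega_i = B(o,i)$ and of a global extension $\theta$ with $\theta|_{\pinf M}=f$.

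First, the uniform bound on the right-hand side of Lemma~\ref{phiintfinlem} fails. You dismiss the region outside the sub-cone $V$ on the grounds that $\nabla\theta$ is there ``bounded/zero.'' But $\theta$ restricting to a non-constant $f$ on $\pinf M\setminus \bar V$ forces $\nabla\theta\not\equiv 0$ at arbitrarily large $r$, and a mere uniform bound $\abs{\nabla\theta}\le C$ gives $F$-argument $\approx C\,r\log(1+r)/L(r)\approx C\,r\to\infty$. Since $F$ is a Young function it blows up there, and multiplying by $L(r)$ and by the Jacobi-field Jacobian over a region of infinite volume produces a divergent integral, so $\int_{B(o,i)}F(\cdot)L(r)$ is not bounded independently of $i$. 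The paper sidesteps this completely: it takes $\Omega_j=\Omega\cap B(o,j)$ with $\Omega$ a \emph{truncated cone} and sets
\[
\theta(x)=\min\Bigl(1,\max\bigl(r_1+1-r(x),\,\delta^{-1}\an_o(x_0,x)\bigr)\Bigr),
\]
which is identically $1$ outside $\Omega$ (so $\nabla\theta=0$ there) and whose gradient inside the cone is controlled by $\delta^{-1}\abs{\nabla\an_o(x_0,\cdot)}\le \delta^{-1}/j(x)$ via Lemma~\ref{anglegrad}. Only then do Lemmas~\ref{vahakpinclemma3} and \ref{fsmall} give finiteness of the integral.

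Second, your final Perron step is not justified. You write that $u\in\U_f$ ``up to small perturbations'' and $-u\in\U_{-f}$, but membership in $\U_f$ requires $\liminf_{x\to y}u(x)\ge f(y)$ for \emph{every} $y\in\pinf M$, whereas your argument only yields $u(x)\to f(x_0)$ at the single point $x_0$. Nothing controls the boundary behavior of $u$ elsewhere. The paper's device is precisely designed to avoid this: $\theta$ is not an extension of $f$ at all, but an indicator-type function that is $0$ near $x_0$ and $1$ away. One shows $u(x)\to 0$ as $x\to x_0$, then sets $w=\min(1,2u)$ on $\Omega$ and $w\equiv 1$ outside $\Omega$, observes that $w$ is $\A$-superharmonic, and verifies that $f(x_0)+\ve'+2(\sup\abs{f})\,w$ lies in $\U_f$ because $w\ge 0$ in the cone (where $\abs{f-f(x_0)}<\ve'$) and $w\equiv 1$ off the cone. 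This one-sided barrier inequality gives $\limsup_{x\to x_0}\overline{H}_f\le f(x_0)+\ve'$; the matching lower bound comes from the symmetric construction for $-f$. Without a barrier of this kind, there is no route from pointwise convergence of $u$ at $x_0$ to the regularity of $x_0$.
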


\begin{proof}
Let $f\colon \pinf M \to \R$ be a continuous function. To prove the $\A$-regularity of $x_0$,
we need to show that
   $$
     \lim_{x\to x_0} \overline{H}_f(x) = f(x_0).
   $$
Fix $\ve'>0$ and let $v_0 = \dot{\gamma}_0^{o,x_0}$ be the initial vector of the geodesic
ray from $o$ to $x_0$. Furthermore, let $\delta \in (0,\pi)$ and $R_0>0$ be such that
$T(v_0,\delta,R_0) \subset U$ and that $|f(x_1)-f(x_0)| < \ve'$ for all $x_1 \in
C(v_0,\delta) \cap \pinf M$. Fix also $\tve \in (0,\ve)$, where $\ve$ is the constant in
\eqref{curvupbound1}, and let $r_1 > \max(2,R_1)$, where $R_1 \ge R_0$ is given by
Proposition \ref{choiprop}.

We denote $\Omega = T(v_0,\delta,r_1) \cap M$ and define $\theta \in C(\bM)$ by setting
   $$
     \theta(x) = \min \Big(1, \max \big( r_1 + 1-r(x),\delta^{-1} \an_o(x_0,x)\big) \Big).
   $$

Let $\Omega_j = \Omega \cap B(o,j)$ for integers $j>r_1$ and let $u_j$ be the unique $\A$-harmonic
function in $\Omega_j$ with $u_j - \theta \in \Wop(\Omega_j)$. Each $y\in \p\Omega_j$ is
$\A$-regular and hence functions $u_j$ can be continuously extended to $\p\Omega_j$ by
setting $u_j = \theta$ on $\p\Omega_j$. Next we notice that $0\le u_j \le 1$, so the
sequence $(u_j)$ is equicontinuous, and hence, by Arzelá-Ascoli, we obtain a subsequence
(still denoted by $(u_j)$) that converges locally uniformly to a continuous function 
$u\colon \bar{\Omega} \to [0,1]$. It follows that $u$ is $\A$-harmonic in $\Omega$; see e.g.
\cite[Chapter 6]{heinonenkilpelainenmartio}. 

Next we aim to prove  that
   \begin{equation}\label{limitu}
    \lim_{\underset{x\in\Omega}{x\to x_0}} u(x) =0,
   \end{equation}
and for that we use geodesic polar coordinates $(r,v)$ for points $x\in\Omega$. Here $r=r(x)\in[r_1,\infty)$
and $v=\dot{\gamma}^{o,x}_0$, and we denote by $\lambda(r,v)$ the Jacobian of these polar
coordinates. Denote $\tilde{\theta} = \theta/c_0$, $\tilde{u}_j = u_j/c_0$ and $\tilde{u} =
u/c_0$, where $c_0\ge1$ is a constant given by Lemma \ref{phiintfinlem}.
Then applying Fatou's lemma and Lemma \ref{phiintfinlem} to $\Omega_j$ we
obtain
\begin{align}\label{phiintfin}
\int_{\Omega} \varphi &\big( |\tilde{u}-\tilde{\theta}|\big)^p = \int_{\Omega} \varphi\big(
     |u-\theta|/c_0\big)^p \le \liminf_{j\to\infty} \int_{\Omega_j} \varphi\big(
     |u_j-\theta|/c_0\big)^p \nonumber \\
&\le \liminf_{j\to\infty} \int_{\Omega_j} \varphi\big(
     |u_j-\theta|/c_0\big)^p L(r) \nonumber \\
&\le c_0 + c_0 \int_{\Omega} F\left(\frac{c_0 |\nabla \theta| r \log(1+r)}{L(r)}\right) L(r) \nonumber\\
&= c_0 + c_0 \int_{r_1}^{\infty} \int_{S_oM} F\left(\frac{c_0 |\nabla \theta(r,v)| r
\log(1+r)}{\log(1+r) + \mathcal{C}(r)}\right) \big(\log(1+r)  \\
     &\quad \quad+ \mathcal{C}(r)\big) \lambda(r,v) \,dv\,dr \nonumber \\
&\le c_0 + c_0 \int_{r_1}^{\infty} \int_{S_oM} F\left(\frac{r}{c_1j(r,v)}\right)
     \Big(\log(1+r)+ \mathcal{C}(r)\Big)j(r,v)^{C(n-1)} \,dv\,dr \nonumber \\
&< \infty \nonumber.
\end{align}
At the end we applied also Lemmas \ref{vahakpinclemma3}, \ref{anglegrad}, and \ref{fsmall}.

Next, we extend each $u_j$ to a function $u_j \in \LWp(M) \cap C(M)$ by setting $u_j(y) =
\theta(y)$ for every $y\in M\setminus \Omega_j$. Let $x\in \Omega$ and fix $s\in (0,r_S)$.
For $j$ large enough, we obtain (by \cite[Lemma 2.20]{vahakangasyoung}, stated also in
\cite[Lemma 15]{casterasholopainenripoll2})
 $$
   \sup_{B(x,s/2)} \varphi\big(|\tilde{u}_j - \tilde{\theta}| \big)^{p(n+1)} \le c \int_{B(x,s)}
   \varphi\big(|\tilde{u}_j - \tilde{\theta}| \big)^p.
 $$
Applying this with dominated convergence theorem, we get
   \begin{align} \label{supintestim}
    \sup_{B(x,s/2)} &\varphi\big(|\tilde{u} - \tilde{\theta}| \big)^{p(n+1)} =
    \sup_{B(x,s/2)} \lim_{j\to\infty} \varphi\big(|\tilde{u}_j - \tilde{\theta}| \big)^{p(n+1)} \nonumber \\
    &\le \limsup_{j\to\infty} \sup_{B(x,s/2)} \varphi\big(|\tilde{u}_j - \tilde{\theta}| \big)^{p(n+1)} \\
    &\le c\limsup_{j\to\infty} \int_{B(x,s)} \varphi\big(|\tilde{u}_j - \tilde{\theta}| \big)^{p}
    = c\int_{B(x,s)} \varphi\big(|\tilde{u} - \tilde{\theta}| \big)^{p}. \nonumber
   \end{align}
Let $(x_k)\subset \Omega$ be a sequence such that $x_k\to x_0$ as $k\to\infty$. We apply the
estimate \eqref{supintestim} with $x=x_k$ and a fixed $s\in(0,r_S)$, together with
\eqref{phiintfin}, to obtain
   $$
     \lim_{k\to\infty} \sup_{B(x_k,s/2)} \varphi\big(|\tilde{u} - \tilde{\theta}| \big)^{p(n+1)}
     \le c \lim_{k\to\infty} \int_{B(x_k,s)} \varphi\big(|\tilde{u} - \tilde{\theta}| \big)^{p} =0.
   $$
It follows that
   $$
     \lim_{k\to\infty} |\tilde{u}(x_k) - \tilde{\theta}(x_k)| = 0,
   $$
which, in turn, implies \eqref{limitu}.

Define a function $w\colon M \to \R$ by
 $$
   w(x) = \begin{cases}
           \min \big(1, 2u(x)\big) &\text{if } x\in\Omega; \\
           1, &\text{if } x\in M\setminus \Omega.
          \end{cases}
 $$
The minimum of two $\A$-superharmonic functions is $\A$-superharmonic and hence $w$ is
$\A$-superharmonic. The definition of $\overline{H}_f$ implies that
 $$
   \overline{H}_f \le f(x_0) + \ve' + 2(\sup|f|) w,
 $$
and therefore, by \eqref{limitu}, we have
 $$
   \limsup_{x\to x_0} \overline{H}_f(x) \le f(x_0) + \ve'.
 $$
Similarly one can prove that
 $$
   \liminf_{x\to x_0} \underline{H}_f(x) \ge f(x_0) - \ve',
 $$
and because $\overline{H}_f \ge \underline{H}_f$ and $\ve'$ was arbitrary, we conclude
that
 $$
   \lim_{x\to x_0} \overline{H}_f(x) = f(x_0).
 $$
Therefore $x_0$ is $\A$-regular point.
\end{proof}

\bibliography{minimal}
\bibliographystyle{plain}

\end{document}